\documentclass[11pt]{amsart}

\usepackage{amsmath,amssymb}
\usepackage{mathtools}
\usepackage{amsthm}
\usepackage[abbrev]{amsrefs}
\usepackage{latexsym}
\usepackage{graphicx}
\usepackage{tikz}
\usepackage{bm}

\allowdisplaybreaks[1]

\title[Norms of Schneider--Teitelbaum Polynomials]{Norms of Schneider--Teitelbaum Polynomials \\in $p$-adic Fourier Theory}
\date{}
\author{Yu Katagiri}

\newtheorem{thm}{Theorem}[section]
\newtheorem*{thm*}{Theorem}
\newtheorem{lem}[thm]{Lemma}
\newtheorem*{lem*}{Lemma}
\newtheorem{prop}[thm]{Proposition}
\newtheorem*{prop*}{Proposition}
\newtheorem{cor}[thm]{Corollary}
\newtheorem{cor*}{Corollary}

\theoremstyle{definition}
\newtheorem{dfn}[thm]{Definition}
\newtheorem*{dfn*}{Definition}
\newtheorem{ex}[thm]{Example}
\newtheorem*{ex*}{Example}
\newtheorem{rmk}[thm]{Remark}
\newtheorem*{rmk*}{Remark}

\newtheorem*{conj*}{Conjecture}

\newcommand{\Q}{\mathbb{Q}}
\newcommand{\Z}{\mathbb{Z}}
\newcommand{\C}{\mathbb{C}}
\newcommand{\OK}{\mathcal O_K}

\newcommand{\Gal}{\operatorname{Gal}}

\makeatletter
\@addtoreset{equation}{section}

\makeatother

\makeatletter\@namedef{subjclassname@2020}{\textup{2020} Mathematics Subject Classification}\makeatother

\keywords{$p$-adic Fourier theory, Schneider--Teitelbaum polynomials.}
\subjclass[2020]{Primary: 13F20; Secondary: 11S31, 13J07, 46S10}

\begin{document}
\maketitle

\begin{abstract}
Let $F$ be the unramified quadratic extension of $\mathbb{Q}_p$, and let $P_n(T)$ denote the polynomials arising in the $p$-adic Fourier theory of Schneider and Teitelbaum. 
We determine explicitly the norms of the functions $P_n(x\Omega)$ on the Banach spaces of locally $F$-analytic functions of fixed order, where $\Omega$ is a Lubin--Tate period. 
Our computation is based on a recent valuation formula of Ardakov and Berger for the values $P_n(\Omega)$.
As an application, we obtain an explicit normalization of the basis constructed by Bannai and Kobayashi, whose elements all have norm one. 
%We also investigate the failure of the Schneider--Teitelbaum polynomials to be orthogonal by means of $t$-orthogonality.
\end{abstract}

\section{Introduction}

Let $p$ be a prime and let $\C_p$ denote the completion of an algebraic closure of $\Q_p$ with the absolute value $|\cdot |$ normalized by $|p|=p^{-1}$.
We regard every finite extension of $\Q_p$ as a subfield of $\C_p$.

The classical theorem of Amice \cite{Am64} gives an explicit description of locally analytic functions on $\Z_p$ in terms of the binomial polynomials $\binom{x}{n}$.
Schneider and Teitelbaum \cite{ST01} generalized this theory to an arbitrary finite extension of $\Q_p$ by means of Lubin--Tate formal groups.
Their theory, usually referred to as $p$-adic Fourier theory, provides an analogue of the Amice transform and plays an important role in the study of locally analytic functions and distributions over $p$-adic fields.

We briefly recall the objects that will be relevant to the present note.
Let $K$ be a finite extension of $\Q_p$, $q$ be the cardinality of the residue field, and $e$ be the ramification index of $K$.
We fix a uniformizer $\pi$ of $K$ and a Lubin--Tate formal group $\mathcal{G}$ attached to $\pi$.
Let $\log_{\rm LT}(Z)$ denote its formal logarithm.
Choose a generator of the rank-one $\mathcal{O}_K$-module $\operatorname{Hom}_{\mathcal{O}_{\C_p}}(\mathcal{G}, \widehat{\mathbb{G}_m})$ and write it as $\exp (\Omega \log_{\rm LT}(Z))-1$.
The resulting Lubin–Tate period $\Omega \in \mathcal{O}_{\C_p}$ is determined up to multiplication by an element of $\mathcal{O}_K^{\times}$ and satisfies 
\begin{align*}
v_p(\Omega)=\frac{1}{p-1}-\frac{1}{e(q-1)}.
\end{align*}
We define the polynomials $P_n(T) \in K[T]$ by the generating series
\begin{align*}
\exp(X\log_{\rm LT}(Z))=\sum_{n=0}^{\infty} P_n(X)Z^n.
\end{align*}
In this note, we call $P_n(T)$ the $n$-th {\it Schneider--Teitelbaum polynomial}.%この文の必要性

These polynomials may be regarded as Lubin--Tate analogues of the classical binomial polynomials.
Indeed, in the case $K=\Q_p$, $\mathcal{G}=\widehat{\mathbb{G}_m}$, $\pi=p$ and $\log_{\rm LT}(Z)=\log (1+Z)$, we have $P_n(T)=\binom{T}{n}$.
Thus the Schneider--Teitelbaum theory recovers the classical theory of Amice in this case.
An important integrality property is that $P_n(x \Omega) \in \mathcal{O}_{\C_p}$ if $x \in \mathcal{O}_K$.
%Schneider and Teitelbaum studied several properties of $P_n$ in \cite{ST01}.
%We emphasize that $P_n(x \Omega) \in \mathcal{O}_{\C_p}$ if $x \in \mathcal{O}_K$.

We next recall the Banach spaces on which these polynomials will be considered.
Let $N$ be a positive integer.
For $a \in \mathcal{O}_K$, let $A(a+\pi^N\mathcal{O}_K, \C_p)$ denote the space of analytic functions on $a+\pi^N\mathcal{O}_K$.
If $f \in A(a+\pi^N\mathcal{O}_K, \C_p)$ has Taylor expansion $f(a+\pi^Nx)=\sum_{n=0}^{\infty} a_nx^n$, the norm of $f$ is given by
\begin{align*}
||f||_{a, N} \coloneqq \max_{n \geq 0} \{ |a_n| \}.
\end{align*}
We define the space $LA_N(\mathcal{O}_K, \C_p)$ of locally $K$-analytic functions on $\mathcal{O}_K$ of order $N$ by
\begin{align*}
LA_N(\mathcal{O}_K, \C_p) \coloneqq \{ f: \mathcal{O}_K \rightarrow \C_p \mid f|_{a+\pi^N\mathcal{O}_K} \in A(a+\pi^N\mathcal{O}_K, \C_p) \ \text{for any $a \in \mathcal{O}_K$} \},
\end{align*}
equipped with the norm
\begin{align*}
||f||_N \coloneqq \max_{a \in \mathcal{O}_K} \{ ||f||_{a, N} \}.
\end{align*}

Schneider and Teitelbaum showed that $\{ P_n(x\Omega) \}_{n \geq 0}$ forms a topological generating system of $LA_N(\mathcal{O}_K, \C_p)$.
Bannai and Kobayashi \cite{BK16} subsequently gave an explicit construction of the $p$-adic Fourier transform and obtained quantitative estimates for the Schneider--Teitelbaum polynomials.
As an application, they constructed a Banach basis of $LA_N(\mathcal{O}_K, \C_p)$.
More precisely, if we put
\begin{align*}
e_{N, n}(x) \coloneqq \underline{\gamma} \left( \left\lfloor \frac{n}{q^N} \right\rfloor \right) P_n(x\Omega) 
\end{align*}
with $\underline{\gamma}(k) \in \C_p$ of norm $|\underline{\gamma}(k)|=\underline{\rho}(k) \coloneqq \min_{0 \leq i \leq k} \{|i!/\Omega^i|\}$, the set $\{ e_{N, n}(x)\}_{n \geq 0}$ forms a basis of $LA_N(\mathcal{O}_K, \C_p)$ satisfying
\begin{align*}
c \left| \frac{q}{\pi} \right|^N \leq ||e_{N, n}(x)||_N \leq 1
\end{align*}
with an explicit constant $c$. (See \cite[Section 4]{BK16} for the details.)
When $K=\Q_p$, this basis reduces to the classical normalized binomial basis appearing in Amice's theory.
%We note that $\{ e_{N, n}(x)\}_{n \geq 0}$ coincides with $\left\{ \lfloor n/p^N \rfloor ! \binom{x}{n} \right\}_{n \geq 0}$ and hence this recovers the Amice's result.

Motivated by the above result of Bannai and Kobayashi, and by a recent paper by Ardakov and Berger \cite{AB24b}, we compute the norm $|| P_n (x\Omega)||_N$ exactly in a special case.
For the rest of the note, let $F$ be the unramified quadratic extension of $\Q_p$ and take a uniformizer $\pi=p$ and the Lubin-Tate formal group whose logarithm is
\begin{align*}
\log_{\rm LT} (Z)=\sum_{i=0}^{\infty} \frac{Z^{q^i}}{p^i},
\end{align*}
where $q=p^2$.
In this setting, Ardakov and Berger proved the following theorem.

\begin{thm}[{\cite[Theorem A]{AB24b}}]\label{thmAB}
For any nonnegative integer $n$, we have
\begin{align*}
v_p(P_n(\Omega))=w(n).
\end{align*}
Here, $w : \Z_{\geq 0} \rightarrow \Q$ is given by
\begin{align}\label{monna}
w(n) \coloneqq \frac{p}{q-1} \sum_{i=0}^h n_ip^{-i}
\end{align}
if $n$ has the $p$-adic expansion $n=\sum_{i=0}^h n_ip^{i}$.
\end{thm}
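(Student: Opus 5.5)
The plan is to prove the formula by induction on the number of $p$-adic digits of $n$. Write $n=pm+r$ with $0\le r<p$. The function $w$ in (\ref{monna}) satisfies
\begin{align*}
w(pm+r)=w(r)+\tfrac{1}{p}\,w(m), \qquad w(r)=\frac{rp}{q-1}=r\,v_p(\Omega).
\end{align*}
So it suffices to prove two things:
\begin{align*}
v_p(P_r(\Omega))=r\,v_p(\Omega)\quad (0\le r<p), \qquad v_p(P_{pm+r}(\Omega))=v_p(P_r(\Omega))+\tfrac1p\, v_p(P_m(\Omega)).
\end{align*}
The base case $r<p$ is direct. Since $\log_{\rm LT}(Z)=Z+O(Z^{q})$ and $q>r$, we get $P_r(\Omega)=\Omega^r/r!$, and $r!$ is a unit.

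For the inductive step I would work with the power series
\begin{align*}
f(Z)=\exp(\Omega\log_{\rm LT}(Z))=\sum_{n\ge 0}P_n(\Omega)Z^n\in\mathcal{O}_{\C_p}[[Z]].
\end{align*}
This series is a homomorphism $\mathcal{G}\to\widehat{\mathbb{G}_m}$, so $f([a](Z))=f(Z)^a$ for $a\in\mathcal{O}_F$. I also want the Galois equivariance $f^{\tau}=f^{\chi(\tau)}$, where $\chi$ is the Lubin--Tate character. Taking $a=p$ and using $[p](Z)\equiv Z^{q}$ modulo $p$ gives a relation $f(Z)^p\approx f(Z^{q})$. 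I would combine this with Frobenius on coefficients, $f(Z)^p\equiv f^{(\mathrm{Frob})}(Z^p)$. Together these should produce a "square root of Frobenius" relation of the shape
\begin{align*}
f(Z)\approx f_0(Z)\cdot g(Z^p),
\end{align*}
where $f_0(Z)=\sum_{r<p}\frac{\Omega^r}{r!}Z^r$ and $g$ is a series whose coefficients are, up to units, $P_m(\Omega)^{1/p}$-type quantities. Concretely, $g$ should be a Galois twist of $f$ whose coefficients have valuation $v_p(P_m(\Omega))/p$. The source of the factor $1/p$ is $v_p(\Omega^{p})=p\cdot p/(q-1)=v_p(\Omega)+1$, which is consistent with the twisted period appearing in $g$ having valuation $v_p(\Omega)/p$. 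Reading off the coefficient of $Z^{pm+r}$ then gives the desired recursion, provided the leading term dominates.

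The main obstacle is that every congruence above holds only modulo $p$, while the valuations $w(n)$ grow without bound. A naive mod-$p$ comparison therefore determines $v_p(P_n(\Omega))$ only when $w(n)<1$. To get around this, I would first try to normalise. The idea is to divide the coefficient of $Z^{pm+r}$ by the expected size $\Omega^{r}\cdot\beta_m$, where $\beta_m$ has valuation $w(m)/p$ and is built inductively from lower coefficients. I would then show that the normalised quantities satisfy an exact congruence modulo the maximal ideal, with nonzero (unit) reduction.

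To do this, I would expand $f(Z)^p=f([p]Z)$ coefficientwise. The error terms come from binomial coefficients $\binom{p}{j}$ and from the non-leading terms of $[p](Z)-Z^{q}$. The goal is to prove, by a second induction on $n$, that each error term has strictly larger valuation than $w(n)$. The key input is the strict concavity-type inequality $w(a)+w(b)\ge w(a+b)$, with equality iff the addition has no carries, plus a gain of $1$ from each factor of $p$. If this carry analysis goes through, the unit reduction gives exact equality $v_p(P_n(\Omega))=w(n)$ rather than merely an inequality.
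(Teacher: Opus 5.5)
The paper does not prove this statement; it quotes it from Ardakov--Berger \cite[Theorem A]{AB24b}. Your sketch therefore has to stand on its own, and it has a genuine gap: the inductive identity $v_p(P_{pm+r}(\Omega))=v_p(P_r(\Omega))+\frac1p v_p(P_m(\Omega))$ is never derived.

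The two congruences you combine only give $P_n(\Omega)^p\equiv P_{n/p}(\Omega)$ modulo $p$ (and $\equiv 0$ if $p\nmid n$). That is a statement about $p$-th powers of coefficients, not a factorization $f\approx f_0(Z)\,g(Z^p)$. Your description of $g$ cannot be right either. Galois conjugation preserves $v_p$, so no twist of $f$, and no conjugate $\tau(\sigma)\Omega$, has valuations divided by $p$. Also $v_p(\Omega^p)=p^2/(q-1)=1+v_p(\Omega)/p$, not $1+v_p(\Omega)$.

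The diagnosis of the obstacle is off as well. $w$ is bounded: $w(n)<pu=q/(q-1)$ for all $n$. So the only problem is that valuations in $[1,q/(q-1))$ are invisible modulo $p$, and for exactly those your ``second induction'' is circular. Comparing coefficients of $Z^{qk}$ in $f(Z)^p=f([p](Z))$ gives $P_{pk}(\Omega)^p=P_k(\Omega)+E$. The error $E$ contains $p\,P_{qk}(\Omega)$ and, more generally, $p\prod_i P_{n_i}(\Omega)$ with $n_i$ as large as $qk>pk$. An induction on $n$ says nothing about these terms. Integrality alone only gives $v_p(E)\geq 1$, which is useless once $w(k)\geq 1$; for example $k=2q-1$ has $w(k)=1+u/q$. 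To make your scheme work you would need $v_p(P_j(\Omega))\ge w(j)$ for \emph{all} $j$ in advance, which is half of the theorem.

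The missing idea is a genuine ``square root of Frobenius'' factorization, and it comes from Witt vectors rather than Galois twists.
\begin{enumerate}
\item Since $\log f=\Omega\log_{\rm LT}(Z)$ involves only the monomials $Z^{p^{2i}}$, $f$ is $p$-typical. Hence $f(Z)=\prod_{j\ge0}E(\beta_jZ^{p^j})$, where $E$ is the Artin--Hasse exponential and $\beta=(\beta_j)_j\in W(\mathcal{O}_{\C_p})$. This step uses $f\in\mathcal{O}_{\C_p}[[Z]]$ and the $p$-integrality of $E$.
\item The ghost components of $\beta$ are $(\Omega,0,p\Omega,0,p^2\Omega,\dots)$, so $F\beta=V\beta$ and $\beta_0=\Omega$.
\item Reading $F\beta=V\beta$ componentwise modulo $p$, using $(F\beta)_j\equiv\beta_j^p$, gives $\beta_j^p\equiv\beta_{j-1}\pmod p$ for $j\ge1$.
\item Inductively $v_p(\beta_{j-1})=up^{1-j}<1$, so this congruence is already sharp and yields $v_p(\beta_j)=up^{-j}$. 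For instance $\beta_1=-\Omega^p/p$ has valuation $pu-1=u/p$, which is the element you were looking for.
\item Write $E(T)=\sum_k e_kT^k$. Then $P_n(\Omega)=\sum\prod_j e_{k_j}\beta_j^{k_j}$, summed over $(k_j)$ with $\sum_jk_jp^j=n$. The coefficients $e_k$ are $p$-integral, and $e_k=1/k!$ is a unit for $k<p$.
\item The base-$p$ digit term has valuation exactly $w(n)$. Every other representation has some $k_j\ge p$, hence a strictly larger $\sum_jk_jp^{-j}$ and a strictly larger valuation. So $v_p(P_n(\Omega))=w(n)$.
\end{enumerate}
This realizes your heuristic exactly as $f=E(\Omega Z)\,g(Z^p)$, with $g$ attached to the shifted Witt vector $(\beta_1,\beta_2,\dots)$. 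It sidesteps the precision problem because every valuation that must be read off modulo $p$ is $<1$.
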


Using Theorem \ref{thmAB} and the properties of $w$, we prove our main result.
%we prove the following main result in the present note.

\begin{thm}\label{main1}
We write $n=lq^{N+1}+r$ with $l \geq 0$ and $0 \leq r \leq q^{N+1}-1$.
Then we have
\begin{align*}
|| P_n (x\Omega)||_N =p^{-G_N(n)},
\end{align*}
where
\begin{align*}
G_N(n) \coloneqq w(r)+l(u-1)-v_p(l!)
\end{align*}
and $u \coloneqq p/(q-1)$.
\end{thm}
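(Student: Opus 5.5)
The plan is to reduce the computation of $||P_n(x\Omega)||_N$ to a single disc and then read off the Gauss norm from the generating series, using Theorem \ref{thmAB} for the lower bound.

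\emph{Step 1: reduction to $a=0$.} By the generating series, $P_n(X+Y)=\sum_{i+j=n}P_i(X)P_j(Y)$. Hence
\begin{align*}
\sum_n P_n((a+p^Nx)\Omega)Z^n=\exp(a\Omega\log_{\rm LT}(Z))\cdot\exp(p^Nx\Omega\log_{\rm LT}(Z)).
\end{align*}
For $a\in\mathcal O_F$ the first factor is a power series in $Z$ with coefficients in $\mathcal O_{\C_p}$ and constant term $1$. Its inverse is the same factor with $-a$ in place of $a$, so it is also integral. Multiplication by an invertible integral series does not change the maximum over $n$ of the Gauss norms in $x$ of the coefficients of $Z^n$. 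I will refine this to the individual coefficients by a triangularity argument, ordering $n$ by the value of $G_N$. This should give $||P_n(x\Omega)||_{a,N}=||P_n(p^Nx\Omega)||_{0,N}$, i.e. the Gauss norm on the closed unit disc of the polynomial $x\mapsto P_n(p^Nx\Omega)$. If the triangularity fails, I fall back to proving only the inequality $\le$ for general $a$, which is all that is needed, and handle $a=0$ exactly.

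\emph{Step 2: the generating function in $x$.} Write
\begin{align*}
p^N\log_{\rm LT}(Z)=\sum_{i\le N}p^{N-i}Z^{q^i}+\sum_{j\ge1}\frac{Z^{q^{N+j}}}{p^{j}}.
\end{align*}
The first sum has integral coefficients, and its exponential contributes, after multiplying by $\Omega^k/k!$, terms that I expect to account for $w(r)$. This is where Theorem \ref{thmAB} and the additivity properties of $w$ on $p$-adic digits enter, since $r<q^{N+1}$ sees only the first $2(N+1)$ digits. The tail is dominated by $Z^{q^{N+1}}/p$. The monomial $x^l\Omega^l(Z^{q^{N+1}}/p)^l/l!$ has absolute value $p^{-(l(u-1)-v_p(l!))}$, because $v_p(\Omega)=u-1/(q-1)\cdot(\ldots)$; here $e=1$ and $v_p(\Omega)=1/(p-1)-1/(q-1)=p/(q-1)=u$. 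This is the source of $l(u-1)-v_p(l!)$.

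\emph{Step 3: upper bound.} Expanding the product of the two exponentials, I will bound every monomial $x^kZ^n$ by $p^{-G_N(n)}$. For this I need the superadditivity-type inequality $w(r_1)+w(r_2)\ge w(r_1+r_2)$, with carries only increasing $w$, together with the estimate $v_p(\Omega^k/k!)\ge k(u-1/(p-1))$.

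\emph{Step 4: lower bound.} I will exhibit one coefficient of $x^k$ in $P_n(p^Nx\Omega)$ attaining $p^{-G_N(n)}$. The natural candidate is the product of the coefficient realizing $w(r)$, coming from $P_r$ evaluated via Theorem \ref{thmAB}, and the pure $l$-th power term above. Alternatively, I can evaluate at a suitable point $x_0\in\mathcal O_{\C_p}$ and use Theorem \ref{thmAB} directly via $P_n(\Omega)$.

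\emph{Main obstacle.} The hardest part is Step 4: showing that there is no cancellation among the several monomials contributing to the extremal coefficient. I would try first to show that the extremal contribution is unique, by a strict inequality in Step 3 whenever a carry occurs in $p$-adic addition or $l$ is split nontrivially. I expect the strictness of the digit-carry inequality for $w$ to be the delicate point.
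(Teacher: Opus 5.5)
Your Step~1 is false, and the lower bound cannot be obtained along the route you describe. The disc norms $\|P_n(x\Omega)\|_{a,N}$ genuinely depend on $a$; an invertible integral factor in $Z$ preserves only the supremum over $n$ of the Gauss norms, not each one individually.

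Already for $n=1$ we have $P_1(X)=X$. So $\|P_1(x\Omega)\|_{0,N}=|p^N\Omega|=p^{-N-u}$, while on $1+p^N\mathcal{O}_F$ the constant term $\Omega$ gives $\|P_1(x\Omega)\|_{1,N}=p^{-u}=p^{-G_N(1)}$.

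In general, at $a=0$ only the terms with $M(\nu)=n$ survive in $P_n(p^NY\Omega)$, because $P_{n-m}(0)=0$ for $m<n$. For each surviving term, either $k<l$, and then $H(k)>H(l)$. Or $k=l$, and then the base-$q$ digits $r_j$ of $r$ must be produced by $\nu_0,\dots,\nu_N$. Each nonzero digit costs $\phi_{N,j}(r_j)-w(r_jq^j)=(1-q^{-j})w(r_j)+(N-j)r_j>0$ (recall $N\ge 1$). So for $r>0$ every term has valuation strictly above $G_N(n)$, and $\|P_n(x\Omega)\|_{0,N}<p^{-G_N(n)}$. Your fallback of ``handling $a=0$ exactly'' therefore gives the wrong value.

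Evaluating at $x=1$ does not rescue this either. There $|P_n(\Omega)|=p^{-w(n)}$ with $w(n)=q^{-N-1}w(l)+w(r)$, and this exceeds $G_N(n)$ whenever $l>0$, because $u<1$.

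The missing idea is to work on a disc centred at a unit, e.g.\ $a=1$ as the paper does. Your Step~4 candidate ($P_r$ via Theorem~\ref{thmAB} times the pure $l$-th power term) is exactly right, but it only exists there. The factor $\exp(\Omega\log_{\rm LT}(Z))$ supplies $P_r(\Omega)$, and the coefficient of $Y^l$ in $P_n((1+p^NY)\Omega)$ contains $P_r(\Omega)p^{-l}\Omega^l/l!$, whose valuation is exactly $G_N(n)$.

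What remains, and what your proposal leaves as ``the delicate point'', is the proof that every other $\nu$ with $|\nu|=l$ and $M(\nu)\le n$ has strictly larger valuation. In the paper this uses the base-$q$ carry reduction $C(\nu)\ge H(k)$, the strict decrease $H(j)>H(j+1)$ (Lemma~\ref{monolem}), and the strictness cases of Lemma~\ref{ineq phi}. These are combined with $|\nu|=l$ to force $a_N=0$.

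Your upper bound is also missing two ingredients.
\begin{enumerate}
\item On arbitrary discs you need $|P_m(a\Omega)|\le p^{-w(m)}$ for all $a\in\mathcal{O}_F$, which Theorem~\ref{thmAB} alone does not give. The paper gets it from $\sigma(\Omega)=\tau(\sigma)\Omega$ with $\tau(G_F)=\mathcal{O}_F^\times$, then $\exp(px\Omega\log_{\rm LT}(Z))=\exp(x\Omega\log_{\rm LT}(Z))^p$ and subadditivity of $w$.
\item The estimate $v_p(k!)\le k/(p-1)$ is too weak. For the dominant factor $p^{-l}\Omega^l/l!$ it only gives the lower bound $l\bigl(u-1-\frac{1}{p-1}\bigr)$, strictly below the true valuation $l(u-1)-v_p(l!)$. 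So it cannot yield $\|P_n(x\Omega)\|_N\le p^{-G_N(n)}$; you need the exact $v_p(k!)$, as packaged in $\phi_{N,j}$.
\end{enumerate}
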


As a consequence, we obtain an explicit norm-one rescaling of the Bannai--Kobayashi basis.
%Our main result allows us to normalize the Bannai--Kobayashi integral basis as follows.

\begin{cor}\label{main2}
For  $n=lq^{N+1}+r$ with $l \geq 0$ and $0 \leq r \leq q^{N+1}-1$, we define
\begin{align*}
\widetilde{e_{N, n}}(x) \coloneqq P_r(\Omega)^{-1} e_{N, n}(x) = P_r(\Omega)^{-1} \underline{\gamma} \left( \left\lfloor \frac{n}{q^N} \right\rfloor \right) P_n(x\Omega).
\end{align*}
Then, we have $|| \widetilde{e_{N, n}}(x) ||_N=1$. 
%In particular, $\widetilde{e_{N, n}}(x) \in LA_N(\OK, \C_p)_0$.
\end{cor}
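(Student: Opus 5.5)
The plan is to deduce Corollary \ref{main2} directly from Theorem \ref{main1} by computing the absolute value of the scalar $P_r(\Omega)^{-1}\underline{\gamma}(\lfloor n/q^N\rfloor)$ and checking that it is exactly $p^{G_N(n)}$. First, by Theorem \ref{thmAB} we have $v_p(P_r(\Omega))=w(r)<\infty$, so $P_r(\Omega)\neq 0$ and $|P_r(\Omega)^{-1}|=p^{w(r)}$. Since $\|\cdot\|_N$ is a norm, $\|\widetilde{e_{N,n}}\|_N=p^{w(r)}\,\underline{\rho}(k)\,p^{-G_N(n)}$ with $k=\lfloor n/q^N\rfloor$. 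The claim is therefore equivalent to
\begin{align*}
-\log_p \underline{\rho}(k) = l(1-u)+v_p(l!) .
\end{align*}

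In our setting $e=1$ and $q=p^2$, so $v_p(\Omega)=\frac{1}{p-1}-\frac{1}{q-1}=\frac{p}{q-1}=u$. Hence $-\log_p\underline{\rho}(k)=\max_{0\le i\le k} f(i)$, where $f(i)\coloneqq v_p(i!)-iu$. Writing $n=lq^{N+1}+r$ gives $k=lq+s$ with $s=\lfloor r/q^N\rfloor\le q-1$. So it suffices to show that this maximum is attained at $i=ql$ and equals $v_p(l!)+l(1-u)$.

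For the value at $i=ql$, Legendre's formula gives $v_p((qm)!)=v_p(m!)+m(p+1)$. Therefore $f(qm)=v_p(m!)+m(p+1-qu)$, and a short computation shows $p+1-\frac{p^3}{p^2-1}=\frac{p^2-p-1}{p^2-1}=1-u$, so $f(qm)=v_p(m!)+m(1-u)$. Since $u=p/(p^2-1)<1$, this is strictly increasing in $m$.

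For a general $i\le k$, write $i=qm+t$ with $m\le l$ and $0\le t=t_0+t_1p<q$. Using $v_p(i!)=(i-s_p(i))/(p-1)$ and $s_p(i)=s_p(m)+s_p(t)$, we get
\begin{align*}
f(i)=f(qm)+\frac{t}{q-1}-\frac{t_0+t_1}{p-1}=f(qm)-\frac{t_0p+t_1}{q-1}\le f(qm)\le f(ql).
\end{align*}
This yields the required identity, and hence $\|\widetilde{e_{N,n}}\|_N=1$. The only delicate point is this digit bookkeeping showing that the maximum defining $\underline{\rho}(k)$ sits at the multiple $ql$ of $q$. Everything else is immediate from Theorem \ref{main1} and Theorem \ref{thmAB}.
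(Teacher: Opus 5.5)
Your proof is correct and follows the paper's route: reduce to Theorem \ref{main1} and Theorem \ref{thmAB} via $\underline{\rho}(lq+s)=|(lq)!/\Omega^{lq}|$ and $v_p((lq)!/\Omega^{lq})=l(1-u)+v_p(l!)$. The only difference is that the paper quotes this value of $\underline{\rho}$ from \cite[Proposition 3.4]{BK16}, whereas you check it directly for $q=p^2$, $e=1$ via $f(qm+t)=f(qm)-(t_0p+t_1)/(q-1)$, which makes the argument self-contained.
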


\begin{rmk}
\cite[Corollary V.2.5]{Sc03} implies that $\{ \widetilde{e_{N, n}}(x) \}_{n \geq 0}$ is not an orthonormal basis of $LA_N(\mathcal{O}_F, \C_p)$.
Since a basis is orthonormal if and only if it is a topological integral basis of the unit ball, it follows that $\{ \widetilde{e_{N, n}}(x) \}_{n \geq 0}$ is not a topological integral basis of the $\mathcal{O}_{\C_p}$-module
\begin{align*}
LA_N(\mathcal{O}_F, \C_p)_0 \coloneqq \{ f \in LA_N(\mathcal{O}_F, \C_p) \mid ||f||_N \leq 1\}.
\end{align*}
\end{rmk}

The paper is organized as follows. 
In Section 2, we recall properties of the function $w$ following \cite{AB24b}, and explain our strategy for the proof of Theorem \ref{main1}.
In Section 3, we prove the main results by using a function defined in Section 2.
%In Section 4, we discuss the non-orthogonality of $\{P_n(x\Omega)\}_{n\geq 0}$ in $LA_N(\mathcal{O}_K, \C_p)$.

\vspace{10pt}
\noindent
\textsc{Notation:}~ Let $K$ be a finite extension of $\Q_p$ and let $G_K$ denote the absolute Galois group of $K$.
Let $F$ be the unramified quadratic extension of $\Q_p$.

\vspace{10pt}
\noindent
\textsc{Acknowledgments:}~ The author is grateful to Takao Yamazaki and Shinichi Kobayashi for many helpful comments.

\section{Preliminaries}

\subsection{Pointwise valuations}

We first recall two elementary properties of the function $w$ defined in \eqref{monna}, and then use Theorem \ref{thmAB} to study the valuations of $P_n(x\Omega)$ as $x$ varies over $\mathcal{O}_F$.
%and use them to extend the valuation formula in Theorem \ref{thmAB} from $\Omega$ to $x\Omega$ for $x\in\mathcal{O}_F$.

\begin{lem}[{\cite[Proposition 2.1]{AB24b}}]\label{lemAB}
The function $w$ has the following properties.
\begin{enumerate}
\item For any $r \geq 0$, we have $w(pr)=p^{-1}w(r)$.
\item For any $a, b \geq 0$, we have $w(a+b) \leq w(a)+w(b)$.
\end{enumerate}
\end{lem}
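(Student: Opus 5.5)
Both properties follow directly from the definition \eqref{monna}, i.e.\ $w(n)=u\sum_i n_ip^{-i}$ with $u=p/(q-1)$, where $n=\sum_i n_ip^i$ is the $p$-adic expansion.

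\textbf{Part (1).} If $r=\sum_{i=0}^h r_ip^i$ with $0\le r_i\le p-1$, then $pr=\sum_{i=0}^h r_ip^{i+1}$. This is again a valid $p$-adic expansion, with digit $0$ in position $0$ and digit $r_i$ in position $i+1$. Reading off the formula,
\begin{align*}
w(pr)=u\sum_{i=0}^h r_ip^{-(i+1)}=p^{-1}w(r),
\end{align*}
which is (1).

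\textbf{Part (2).} The approach is to track what happens to the digit sum $\sum_i n_ip^{-i}$ under addition with carries. It is convenient to extend $w$ to all finite sequences of nonnegative integers $(c_i)$ by
\begin{align*}
\widetilde w(c)=u\sum_i c_ip^{-i},
\end{align*}
without requiring $c_i\le p-1$. Adding the digit sequences of $a$ and $b$ position by position gives a sequence $c_i=a_i+b_i$ representing $a+b$. By construction, $\widetilde w(c)=w(a)+w(b)$.

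Next we normalize $c$ into the genuine $p$-adic expansion of $a+b$ by repeated carries. A single carry at position $i$ replaces $c_i$ by $c_i-p$ and $c_{i+1}$ by $c_{i+1}+1$; it is applied whenever $c_i\ge p$. The effect on $\widetilde w$ is the change
\begin{align*}
u\left(-p\cdot p^{-i}+p^{-(i+1)}\right)=u\,p^{-i}\left(p^{-1}-p\right)<0 .
\end{align*}
So every carry strictly decreases $\widetilde w$. The represented integer $\sum_i c_ip^i$ is unchanged by each carry.

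It remains to check that the process ends in the genuine expansion. Each carry decreases the digit sum $\sum_i c_i$ by $p-1$, and this sum is a nonnegative integer, so only finitely many carries can occur. The process stops exactly when every $c_i\le p-1$, and at that point the sequence is the unique $p$-adic expansion of $a+b$. Hence
\begin{align*}
w(a+b)=\widetilde w(\text{final})\le\widetilde w(c)=w(a)+w(b),
\end{align*}
which is (2). Equality holds precisely when no carries occur.

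The only point needing care is this termination argument together with the uniqueness of the base-$p$ expansion; the rest is a direct computation from the formula. This reflects the standard fact that base-$p$ carries decrease a digit-weighted sum whenever the weight of position $i+1$ is less than $p$ times the weight of position $i$; here the ratio is $p^{-1}<p$.
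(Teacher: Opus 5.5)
Your proof is correct and complete. In (1) the shifted digit string is already a valid base-$p$ expansion. In (2) each carry changes $\widetilde w$ by $u\,p^{-i}(p^{-1}-p)<0$ and lowers the digit sum by $p-1$, so the normalization terminates at the unique expansion of $a+b$.

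The paper gives no argument of its own here; it simply imports the lemma from \cite[Proposition 2.1]{AB24b}. Your carry argument is therefore a self-contained verification rather than a competing route. Your equality criterion (no carries, i.e.\ $a_i+b_i\le p-1$ for all $i$), combined with (1), is exactly what justifies the identity $w(aq^{N+1}+r)=q^{-N-1}w(a)+w(r)$ for $0\le r<q^{N+1}$. The paper uses that identity without comment in the proof of Lemma~\ref{monolem}.
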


We also recall properties of the character $\tau : G_F \rightarrow \mathcal{O}_F^{\times}$ that describes the Galois action on $\Omega$.
Let $\chi_{\rm cyc}$ and $\chi_{\rm LT}$ denote the cyclotomic and the Lubin--Tate characters, respectively.
Then $\tau=\chi_{\rm LT}^{-1}\chi_{\rm cyc}$ and $\sigma(\Omega)=\tau (\sigma) \Omega$ for any $\sigma \in G_F$.
We refer to \cite[Subsection 2.6]{AB24a} for the definition and further properties of $\tau$.

\if0
We recall properties of the character $\tau : G_F \rightarrow \mathcal{O}_F^{\times}$ which is used to prove the next lemma.
(For precise definition and more detailed properties of $\tau$, see \cite[Subsection 2.6]{AB24a}.)
Let $\chi_{\rm cyc}$ be the cyclotomic character and let $\chi_{\rm LT}$ be the Lubin-Tate character.
Then we have $\tau=\chi_{\rm LT}^{-1}\chi_{\rm cyc}$ and $\sigma(\Omega)=\tau (\sigma) \Omega$ for any $\sigma \in G_F$.
\fi

\begin{lem}\label{comm_to_AB}
Let $n \geq 0$.
For any $x \in \mathcal{O}_F$, we have
\begin{align*}
|P_n(x \Omega)| \leq p^{-w(n)}.
\end{align*}
Moreover, equality holds when $x \in \mathcal{O}_F^{\times}$.
%Moreover, if $x \in \mathcal{O}_F^{\times}$, the equality holds.
\end{lem}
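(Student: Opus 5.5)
The plan is to prove the equality for units first, by a Galois-conjugation argument that reduces to Theorem \ref{thmAB}. The inequality for arbitrary $x\in\mathcal{O}_F$ then follows from an addition formula for the $P_n$ together with the subadditivity of $w$ in Lemma \ref{lemAB}(2).

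\emph{Step 1 (units).} Let $x\in\mathcal{O}_F^{\times}$. I will show that $x=\tau(\sigma)$ for some $\sigma\in G_F$, i.e.\ that $\tau:G_F\to\mathcal{O}_F^{\times}$ is surjective. Granting this, note that $P_n(T)\in F[T]$. Hence
\begin{align*}
\sigma(P_n(\Omega))=P_n(\sigma(\Omega))=P_n(\tau(\sigma)\Omega)=P_n(x\Omega).
\end{align*}
Since $\sigma$ preserves $|\cdot|$, Theorem \ref{thmAB} gives $|P_n(x\Omega)|=|P_n(\Omega)|=p^{-w(n)}$.

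For the surjectivity I would use $\tau=\chi_{\rm LT}^{-1}\chi_{\rm cyc}$ together with the standard relation from local class field theory. Restricted to the inertia subgroup, this relation reads $\chi_{\rm cyc}=N_{F/\Q_p}\circ\chi_{\rm LT}$, where $\chi_{\rm LT}$ is attached to $\pi=p$. Writing $\varphi$ for the nontrivial automorphism of $F$, this gives on inertia
\begin{align*}
\tau=\chi_{\rm LT}^{-1}\,\chi_{\rm LT}\,\varphi(\chi_{\rm LT})=\varphi\circ\chi_{\rm LT}.
\end{align*}
Since $\chi_{\rm LT}$ maps inertia onto $\mathcal{O}_F^{\times}$, so does $\tau$. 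I expect this to be the main point requiring care, because the precise normalization of the reciprocity map must be checked. If needed I would instead cite the properties of $\tau$ in \cite[Subsection 2.6]{AB24a}, where the image of $\tau$ is described.

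\emph{Step 2 (addition formula).} Since $\log_{\rm LT}(Z)$ appears linearly in the exponent, we have $\exp((X+Y)\log_{\rm LT}(Z))=\exp(X\log_{\rm LT}(Z))\exp(Y\log_{\rm LT}(Z))$. Comparing coefficients of $Z^n$ gives
\begin{align*}
P_n(X+Y)=\sum_{i=0}^{n}P_i(X)P_{n-i}(Y).
\end{align*}

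\emph{Step 3 (general $x$).} Every $x\in\mathcal{O}_F$ can be written as $x=y+z$ with $y,z\in\mathcal{O}_F^{\times}$. Indeed, the residue field $\mathbb{F}_{q}$ has $q=p^2\geq4$ elements, so every residue class is a sum of two nonzero residues, and any lifts of nonzero residues are units. Applying Step 2 with $X=y\Omega$ and $Y=z\Omega$, and then Step 1 to each factor, gives
\begin{align*}
v_p(P_n(x\Omega))\geq\min_{0\le i\le n}\{w(i)+w(n-i)\}\geq w(n),
\end{align*}
where the last inequality is Lemma \ref{lemAB}(2). This proves $|P_n(x\Omega)|\le p^{-w(n)}$ for all $x\in\mathcal{O}_F$, including $x=0$.
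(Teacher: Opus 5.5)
Your proposal is correct. The unit case is essentially the paper's argument. The paper also gets surjectivity of $\tau$ from local class field theory, in the form $\tau(\operatorname{rec}_F(a))=(N_{F/\Q_p}(a)/a)^{\varepsilon}=\varphi(a)^{\varepsilon}$. This is your identity $\tau=\varphi\circ\chi_{\rm LT}$ on inertia. The normalization you worried about is harmless: changing the convention for $\operatorname{rec}$ inverts $\chi_{\rm LT}$ and $\chi_{\rm cyc}$ simultaneously, and the paper simply records this as the sign $\varepsilon$.

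Your route differs in the passage to non-units. The paper uses the $p$-fold product formula
\begin{align*}
P_n(px\Omega)=\sum_{n_1+\cdots+n_p=n}P_{n_1}(x\Omega)\cdots P_{n_p}(x\Omega),
\end{align*}
obtained from $\exp(px\Omega\log_{\rm LT}(Z))=\exp(x\Omega\log_{\rm LT}(Z))^p$. It then inducts on $v_p(x)$, writing a nonzero $x$ as $p^k$ times a unit, and treats $x=0$ separately. You instead write $x=y+z$ with $y,z\in\mathcal{O}_F^{\times}$, which works because $\mathbb{F}_{p^2}$ has at least three elements, and apply the two-term addition formula once.

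Your version is slightly more economical: there is no induction, $x=0$ is covered uniformly, and subadditivity of $w$ is used once rather than iterated over $p$ summands at each step. The paper's version never looks at the residue field and uses only that $p$ is a uniformizer of $\mathcal{O}_F$. Your decomposition would fail for a residue field with two elements, but that cannot occur here since $q=p^2\geq 4$.
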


\begin{proof}
For $x=0$, the claim is trivial.
Since
\begin{align*}
\exp(px \Omega\log_{\rm LT}(Z))=(\exp(x\Omega\log_{\rm LT}(Z)))^p,
\end{align*}
we have
\begin{align*}
\sum_{n=0}^{\infty} P_n(px\Omega)Z^n=\sum_{n=0}^{\infty} \left( \sum_{\substack{n_1+\cdots +n_p=n \\ n_1, \cdots, n_p \geq 0}} P_{n_1}(x\Omega) \cdots P_{n_p}(x\Omega) \right) Z^n.
\end{align*}
Thus, once the equality for $x\in\mathcal{O}_F^\times$ is established, the first assertion follows from Lemma \ref{lemAB}(2) by induction on $v_p(x)$. 
It therefore remains to prove the equality for $x\in\mathcal{O}_F^\times$.
%Hence, the first assertion follows from Lemma \ref{lemAB}(2) and induction on $v_p(x)$ if the second assertion is true.

%We verify the second assertion.
Let $\varphi$ be the nontrivial element of $\Gal (F/\mathbb{Q}_p)$.
By local class field theory, for any $a \in \mathcal{O}_F^{\times}$, we have
\begin{align*}
\tau ( \operatorname{rec}_F (a))=\left( \frac{N_{F/\mathbb{Q}_p}(a)}{a} \right)^{\varepsilon}=\varphi(a)^{\varepsilon}
\end{align*}
with $\varepsilon \in \{ \pm 1\}$, depending on the convention for $\operatorname{rec}_F$.
Since $\varphi$ is bijective on $\mathcal{O}_F^{\times}$, we see that $\operatorname{Im} \tau=\mathcal{O}_F^{\times}$.
Hence, for any $x \in \mathcal{O}_F^{\times}$, there exists $\sigma \in G_F$ such that $\tau(\sigma)=x$.
Since 
\begin{align*}
P_n(x\Omega)=P_n(\sigma(\Omega))=\sigma(P_n(\Omega)),
\end{align*}
the desired equality follows from Theorem \ref{thmAB}.
\end{proof}

\begin{rmk}
\begin{enumerate}
\item In Section 1, we noted that the period $\Omega$ is unique only up to multiplication by an element of $\mathcal{O}_F^{\times}$.
The second assertion of Lemma \ref{comm_to_AB} shows that the valuation formula in Theorem \ref{thmAB} is independent of this choice.
\item More generally, let $K$ be a finite extension of $\Q_p$, and let $\tau : G_K \rightarrow \OK^{\times}$ be the corresponding character. 
The same argument shows that $v_p(P_n(x\Omega))=v_p(P_n(\Omega))$ for any $x \in \operatorname{Im}\tau$.
%For a general finite extension $K$ of $\Q_p$, by the same argument as the above proof, we find that $v_p(P_n(x\Omega))=v_p(P_n(\Omega))$ for any $x \in \operatorname{Im}\tau$.
\end{enumerate}
\end{rmk}

\subsection{Local expansions and coefficient estimates}
We next derive an expansion that will be used to compute the norm $||P_n(x\Omega)||_N$.
By the definition of $P_n$, we have
\begin{align*}
\sum_{n=0}^{\infty} P_n(X)Z^n=\exp (X \log_{\rm LT}(Z))=\prod_{j=0}^{\infty} \exp( p^{-j}XZ^{q^j}).
\end{align*}
For $a \in \mathcal{O}_F$, we substitute $X=(a+p^NY)\Omega$ into this identity.
This gives
%To compute $||P_n(x\Omega)||_N$, substituting $X=(a+p^NY)\Omega$ with $a \in \mathcal{O}_F$, we obtain 
%
\begin{align*}
\sum_{n=0}^{\infty} P_n((a+p^NY)\Omega)Z^n&=\exp (a\Omega \log_{\rm LT}(Z)) \exp (p^N\Omega Y\log_{\rm LT}(Z)) \\
&=\left( \sum_{n=0}^{\infty} P_n(a\Omega)Z^n \right) \cdot \prod_{j=0}^{\infty} \exp( p^{N-j}\Omega YZ^{q^j}) \\
&=\left( \sum_{n=0}^{\infty} P_n(a\Omega)Z^n \right) \cdot \prod_{j=0}^{\infty} \sum_{\nu_j \geq 0} \frac{( p^{N-j}\Omega Y)^{\nu_j}}{\nu_j !}Z^{q^j\nu_j}.
\end{align*}
For a sequence $\nu=(\nu_j)_{j \geq 0}$ of nonnegative integers with finite support, we use the following notation:
\begin{align*}
M(\nu) \coloneqq \sum_{j=0}^{\infty} \nu_jq^j, \qquad |\nu| \coloneqq \sum_{j=0}^\infty \nu_j.
\end{align*}
Comparing the coefficients of $Z^n$ in the above identity, we obtain
\begin{align}\label{P_n on discs}
P_n((a+p^NY)\Omega)=\sum_{m=0}^n \sum_{\substack{\nu \\ M(\nu)=m}} P_{n-m}(a\Omega) \frac{p^{\sum_{j \geq 0}(N-j)\nu_j} \Omega^{|\nu|}}{\prod_{j \geq 0}\nu_j!} Y^{|\nu|}.
\end{align}
To estimate the $p$-adic valuations of the coefficients in the right-hand side of \eqref{P_n on discs}, we introduce the following auxiliary function.

\begin{dfn}\label{principal lem}
For nonnegative integers $j$ and $t$, we define
\begin{align*}
\phi_{N, j}(t) \coloneqq (u+N-j)t-v_p(t!).
\end{align*}
\end{dfn}

For $0 \leq m \leq n$ and $\nu=(\nu_j)_{j \geq 0}$ with $M(\nu)=m$, we see that 
\begin{align}\label{val of target}
\begin{split}
&v_p \left( P_{n-m}(a\Omega) \frac{p^{\sum_{j \geq 0}(N-j)\nu_j} \Omega^{|\nu|}}{\prod_{j \geq 0}\nu_j!} \right) \\
=& v_p(P_{n-m}(a\Omega)) + \sum_{j \geq 0} v_p \left( \frac{p^{(N-j)\nu_j} \Omega^{\nu_j}}{\nu_j!} \right) \\
\geq & w(n-m) + \sum_{j \geq 0} \phi_{N, j}(\nu_j),
\end{split}
\end{align}
where the last inequality follows from Lemma \ref{comm_to_AB}.

\begin{lem}\label{add of phi}
Let $j$ be a nonnegative integer.
\begin{enumerate}
\item For $a \geq 0$ and $0 \leq r \leq q-1$, we have
\begin{align*}
\phi_{N, j}(aq+r)=\phi_{N, j}(r)+\phi_{N, j+1}(a)+(q-1)(N-j)a.
\end{align*}
\item For nonnegative integers $t_1$ and $t_2$, we have
\begin{align*}
\phi_{N, j}(t_1+t_2) \leq \phi_{N, j}(t_1) + \phi_{N, j}(t_2).
\end{align*}
\end{enumerate}
\end{lem}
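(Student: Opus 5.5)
Both parts reduce to Legendre's formula for $v_p(t!)$. Since $\phi_{N,j}(t)=(u+N-j)t-v_p(t!)$ is the sum of a linear function of $t$ and $-v_p(t!)$, we treat the two pieces separately.

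For part (1), put $t=aq+r$ with $0\le r\le q-1$. The linear part splits as
\[
(u+N-j)(aq+r)=(u+N-j)r+(u+N-j)a+(u+N-j)(q-1)a .
\]
The first two terms are the linear parts of $\phi_{N,j}(r)$ and $\phi_{N,j+1}(a)$, except that $\phi_{N,j+1}(a)$ carries the coefficient $(u+N-j-1)a$. Hence the linear parts combine to $\phi_{N,j}(r)+\phi_{N,j+1}(a)+(q-1)(N-j)a$ provided that
\[
a+(u+N-j)(q-1)a=(q-1)(N-j)a+\bigl[u(q-1)a+a\bigr]
\]
is matched against $v_p((aq+r)!)-v_p(a!)-v_p(r!)$. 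So the whole identity reduces to
\[
v_p((aq+r)!)=v_p(r!)+v_p(a!)+u(q-1)a+a=v_p(a!)+(p+1)a ,
\]
using $u(q-1)=p$. Note that $v_p(r!)=0$ fails in general, since $r$ can be as large as $q-1=p^2-1$. So I compute directly with Legendre's formula $v_p(t!)=(t-s_p(t))/(p-1)$, where $s_p$ is the base-$p$ digit sum. Because $r<p^2$, the digits of $aq+r$ are the digits of $r$ followed by those of $a$ shifted by two places. Therefore $s_p(aq+r)=s_p(a)+s_p(r)$, and
\[
v_p((aq+r)!)=\frac{aq-s_p(a)+r-s_p(r)}{p-1}=v_p(r!)+v_p(a!)+\frac{a(q-1)}{p-1}=v_p(r!)+v_p(a!)+(p+1)a .
\]
Consequently the identity holds with the $v_p(r!)$ term retained, and the linear parts balance exactly. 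The main risk in this part is sign bookkeeping, which I would double-check at this step.

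For part (2), the linear part is additive, so it suffices to show $v_p((t_1+t_2)!)\ge v_p(t_1!)+v_p(t_2!)$. This is exactly the integrality of the binomial coefficient $\binom{t_1+t_2}{t_1}$, or equivalently $s_p(t_1+t_2)\le s_p(t_1)+s_p(t_2)$ by Kummer's theorem. Part (2) is therefore immediate, and the only real work is the digit-sum computation in (1).
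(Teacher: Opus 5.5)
Your proof is correct and follows the same route as the paper. Part (1) reduces, via $(q-1)u=p$, to the identity $v_p((aq+r)!)=(p+1)a+v_p(a!)+v_p(r!)$. The paper simply asserts this identity, while you justify it with Legendre's formula and $s_p(aq+r)=s_p(a)+s_p(r)$. Part (2) reduces to $v_p((t_1+t_2)!)\ge v_p(t_1!)+v_p(t_2!)$.

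One cosmetic fix: in your displayed reduction, the final expression $v_p(a!)+(p+1)a$ drops the $v_p(r!)$ term, which you correctly keep in the Legendre computation that follows.
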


\begin{proof}
Since
\begin{align*}
v_p((aq+r)!)=(p+1)a+v_p(a!)+v_p(r!)
\end{align*}
and $(q-1)u=p$, the first assertion follows by a direct calculation.
The second assertion follows from the fact that
\begin{align*}
v_p((t_1+t_2)!) \geq v_p(t_1!)+v_p(t_2!).
\end{align*}
\end{proof}

\if0
\begin{lem}
For nonnegative integers $t_1$ and $t_2$, we have
%
\begin{align*}
\phi_{N, j}(t_1+t_2) \leq \phi_{N, j}(t_1) + \phi_{N, j}(t_2).
\end{align*}
%
\end{lem}

\begin{proof}
a
\end{proof}
\fi

\begin{lem}\label{ineq phi}
\begin{enumerate}
\item For $0 \leq r \leq q-1$ and $0 \leq j \leq N$, we have
\begin{align*}
\phi_{N, j}(r) \geq w(rq^j).
\end{align*}
Moreover, if $r>0$, then the inequality is strict. %大丈夫か
\item For nonnegative integers $s, t$, we have
\begin{align*}
\phi_{N, N+1+s}(t) \geq \phi_{N, N+1}(q^st).
\end{align*}
Moreover, if $s>0$ and $t>0$, then the inequality is strict.
\end{enumerate}
\end{lem}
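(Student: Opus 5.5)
The plan is to prove both parts by direct computation: write out $\phi$, the relevant factorial valuations and $w$ explicitly, and reduce each inequality to an elementary numerical inequality. Everything uses $u=p/(q-1)$ and $q=p^2$.

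\emph{Part (1).} Write $r=r_0+r_1p$ with $0\le r_0,r_1\le p-1$, which is possible since $r\le q-1=p^2-1$. Because $r<p^2$, Legendre's formula gives $v_p(r!)=r_1$. Applying Lemma \ref{lemAB}(1) $2j$ times gives $w(rq^j)=p^{-2j}w(r)=p^{-2j}u(r_0+r_1/p)$. Since $j\le N$, the term $(N-j)r$ is nonnegative, so
\begin{align*}
\phi_{N,j}(r)\ \ge\ ur-r_1=u(r_0+r_1/p)+r_1\Bigl(u\tfrac{p^2-1}{p}-1\Bigr)=u(r_0+r_1/p)=w(r),
\end{align*}
because $u(p^2-1)/p=1$. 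Combined with $w(r)\ge p^{-2j}w(r)=w(rq^j)$, this proves the inequality.

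For strictness when $r>0$ we split into two cases.
\begin{itemize}
\item If $j=0$, then $N-j=N\ge1$, so the dropped term $(N-j)r$ is positive.
\item If $j\ge1$, then $p^{-2j}<1$ and $w(r)>0$, so $w(r)>p^{-2j}w(r)$.
\end{itemize}
In either case one of the two inequalities above is strict.

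\emph{Part (2).} Here $\phi_{N,N+1+s}(t)=(u-1-s)t-v_p(t!)$. By Legendre's formula,
\begin{align*}
v_p((q^st)!)=v_p(t!)+t\,\frac{q^s-1}{p-1},
\end{align*}
since multiplying by $q^s=p^{2s}$ shifts the base-$p$ digits without changing their sum. Substituting, the difference $\phi_{N,N+1+s}(t)-\phi_{N,N+1}(q^st)$ is
\begin{align*}
t\Bigl[(q^s-1)\Bigl(\tfrac{1}{p-1}-u+1\Bigr)-s\Bigr]=t\Bigl[\tfrac{q(q^s-1)}{q-1}-s\Bigr],
\end{align*}
using $\frac1{p-1}-\frac{p}{p^2-1}=\frac1{q-1}$. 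For $s=0$ the bracket vanishes. For $s\ge1$ we have $(q^s-1)/(q-1)=1+q+\dots+q^{s-1}\ge s$, and multiplying by $q>1$ makes the bracket strictly positive. Hence the difference is $\ge0$, and it is strictly positive when $s,t>0$.

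The main obstacle is bookkeeping rather than any idea. The points to get right are the identity $u(q-1)/p=1$, which makes part (1) tight exactly when $j=0$ and $N$ is dropped, and the correct Legendre formula for $v_p((q^st)!)$.
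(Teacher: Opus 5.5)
Your proof is correct. Part (1) follows the paper's argument exactly, and you spell out the strictness case split ($j=0$ uses $N\ge 1$, $j\ge 1$ uses $w(r)>0$), which the paper leaves implicit. In part (2) you use Legendre's formula to get the difference in closed form, $t\bigl[\tfrac{q(q^s-1)}{q-1}-s\bigr]$; the paper instead inducts on $s$ via Lemma \ref{add of phi}(1) with $r=0$, i.e.\ $\phi_{N,j}(tq)=\phi_{N,j+1}(t)+(q-1)(N-j)t$ with $N-j\le -1$, which is the same factorial identity applied one step at a time, so the two routes are essentially equivalent.
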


\begin{proof}
\begin{enumerate}
\item If we write $r=r_0+r_1p$ with $0 \leq r_0, r_1 \leq p-1$, we have
\begin{align*}
\phi_{N, j}(r) &=(u+N-j)r-r_1 \\
&=\frac{p}{q-1} \left(r_0+r_1p-\frac{p^2-1}{p}r_1 \right) +(N-j)r \\
&=\frac{p}{q-1} (r_0+r_1p^{-1})+(N-j)r=w(r)+(N-j)r.
\end{align*}
Since $w(rq^j)=q^{-j}w(r)$ by Lemma \ref{lemAB}(1), we find that 
\begin{align*}
\phi_{N, j}(r) - w(rq^j) =(1-q^{-j})w(r)+(N-j)r \geq 0.
\end{align*}
\item We show the assertion by induction on $s$.
Suppose that $s>0$ and that the claim holds for $s-1$.
Applying Lemma \ref{add of phi}(1) to $a=t$ and $r=0$, we obtain
\begin{align*}
\phi_{N, j}(tq)=\phi_{N, j+1}(t)+(q-1)(N-j)t.
\end{align*}
If $j \geq N+1$, this implies that 
\begin{align*}
\phi_{N, j}(tq) \leq \phi_{N, j+1}(t)-(q-1)t \leq \phi_{N, j+1}(t).
\end{align*}
Here, we note that the last inequality is strict if $t>0$.
Hence we find that 
\begin{align*}
\phi_{N, N+1+s}(t) \geq \phi_{N, N+1+s-1}(qt) \geq \phi_{N, N+1}(q^{s-1} \cdot qt).
\end{align*}
\end{enumerate}
\end{proof}

\section{Proof of Theorem \ref{main1}}

We first establish a lower bound for the valuations of the coefficients in \eqref{P_n on discs}, and then prove that this bound is attained.

\if0
The proof has two parts.
We first expand $P_n((a+p^NY)\Omega)$ and bound the valuation of each summand by a carry argument in base $q$.
The critical index is $N+1$, and the resulting bound is minimized at $l=\lfloor n/q^{N+1} \rfloor$.
We then examine the coefficient of $Y^l$ on the disk $1+p^N\mathcal{O}_F$.
The summand with $\nu_{N+1}=l$ and all other components zero has valuation $G_N(n)$, whereas every other summand in that coefficient has strictly larger valuation.
This rules out cancellation and proves equality.
\fi

\subsection{Lower bound for the valuation}

In this subsection, we give a lower bound for the right-hand side of \eqref{val of target} which is independent of the choice of $\nu$.
For $0 \leq j \leq \lfloor n/q^{N+1} \rfloor$, we set
\begin{align*}
H(j) \coloneqq w(n-jq^{N+1})+j(u-1)-v_p(j!).
\end{align*}
We first prove the following proposition.

\begin{prop}\label{zenhan}
Let $a \in \mathcal{O}_F$, let $0 \leq m \leq n$, and let $\nu=(\nu_j)_{j \geq 0}$ be a sequence of nonnegative integers with $M(\nu)=m$. 
Then, there exists an integer $k$ such that $0 \leq k \leq \lfloor n/q^{N+1} \rfloor$ and 
\begin{align*}
v_p \left( P_{n-m}(a\Omega) \frac{p^{\sum_{j \geq 0}(N-j)\nu_j} \Omega^{|\nu|}}{\prod_{j \geq 0}\nu_j!} \right) \geq H(k).
\end{align*}
%
%Here, for $0 \leq j \leq \lfloor n/q^{N+1} \rfloor$, we set
%
%\begin{align*}
%H(j) \coloneqq w(n-jq^{N+1})+j(u-1)-v_p(j!).
%\end{align*}
%
\end{prop}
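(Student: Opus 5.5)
We start from \eqref{val of target}, which bounds the valuation below by $w(n-m)+\sum_{j\ge 0}\phi_{N,j}(\nu_j)$. The plan is to reduce $\nu$ step by step to a sequence supported at the single index $N+1$, never increasing this lower bound. Throughout we use $w$ (via Lemma \ref{lemAB}) to absorb the contributions from the small indices $j\le N$.

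\textbf{Step 1: clearing the indices above $N+1$.} For $j=N+1+s$ with $s>0$, Lemma \ref{ineq phi}(2) gives $\phi_{N,N+1+s}(\nu_j)\ge \phi_{N,N+1}(q^s\nu_j)$. Lemma \ref{add of phi}(2) then combines these terms with the existing $\phi_{N,N+1}(\nu_{N+1})$. We obtain
\begin{align*}
\sum_{j\ge N+1}\phi_{N,j}(\nu_j)\ \ge\ \phi_{N,N+1}(T), \qquad T\coloneqq \sum_{s\ge 0} q^s\nu_{N+1+s}.
\end{align*}
Note that $q^{N+1}T=\sum_{j\ge N+1}q^j\nu_j\le m$.

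\textbf{Step 2: clearing the indices $j\le N$.} Write $\nu_j=a_jq+r_j$ with $0\le r_j\le q-1$. By Lemma \ref{add of phi}(1),
\begin{align*}
\phi_{N,j}(\nu_j)=\phi_{N,j}(r_j)+\phi_{N,j+1}(a_j)+(q-1)(N-j)a_j\ \ge\ \phi_{N,j}(r_j)+\phi_{N,j+1}(a_j),
\end{align*}
since $j\le N$. Thus the carry $a_j$ is pushed to index $j+1$, where Lemma \ref{add of phi}(2) merges it with $\nu_{j+1}$. We proceed inductively from $j=0$ up to $j=N$, so that each index $j\le N$ retains a digit $r_j'\in[0,q-1]$ and the total carry lands at index $N+1$. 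This carry is again merged by Lemma \ref{add of phi}(2).

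The combinatorial weight is preserved: $\sum_{j\le N} r_j'q^j+q^{N+1}k=m$. Here $k$ is the resulting count at index $N+1$, which equals $T$ plus the carries, so $q^{N+1}k\le m\le n$ and hence $0\le k\le\lfloor n/q^{N+1}\rfloor$. Lemma \ref{ineq phi}(1) then gives $\phi_{N,j}(r_j')\ge w(r_j'q^j)$.

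\textbf{Step 3: assembling the bound.} The bound so far is
\begin{align*}
w(n-m)+\sum_{j\le N}w(r_j'q^j)+\phi_{N,N+1}(k).
\end{align*}
Subadditivity of $w$ (Lemma \ref{lemAB}(2)) gives
\begin{align*}
w(n-m)+\sum_{j\le N}w(r_j'q^j)\ \ge\ w\Bigl(n-m+\sum_{j\le N} r_j'q^j\Bigr)=w(n-kq^{N+1}).
\end{align*}
Finally, from Definition \ref{principal lem}, $\phi_{N,N+1}(k)=(u-1)k-v_p(k!)$. The total is therefore at least $H(k)$.

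The main obstacle is the bookkeeping in Step 2: the carries must be propagated in the right order, with the merge via Lemma \ref{add of phi}(2) applied before each re-reduction. One must also check that the extra terms $(q-1)(N-j)a_j$ are nonnegative precisely because $j\le N$, so that no index between $0$ and $N$ ever picks up a negative contribution. The argument is cleanest as an induction on $j$ that maintains the invariant of a sequence with unchanged $M(\cdot)$ and no larger $\phi$-sum.
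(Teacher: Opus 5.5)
Your proposal is correct and is essentially the paper's proof. Your Step 2 carry propagation, with the decomposition applied to $\nu_j+a_{j-1}$ after merging as you indicate, is exactly the recursion $b_j=\nu_j+a_{j-1}=a_jq+r_j$ of Lemma \ref{nu for j leq N}, and your Step 1 together with your resulting $k$ and final assembly via Lemma \ref{ineq phi}(1) and subadditivity of $w$ coincide with Lemma \ref{sum_all}, \eqref{def_of_k} and \eqref{juuyou}. The only differences are cosmetic: you treat the indices above $N+1$ first, and you bound $k$ via $q^{N+1}k\le m\le n$ rather than via nonnegativity of $n-kq^{N+1}$.
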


To prove Proposition \ref{zenhan}, we first establish several auxiliary lemmas.
For a sequence $\nu=(\nu_j)_{j \geq 0}$ with $M(\nu)=m$, we recursively define nonnegative integers $a_j, b_j, r_j$ as follows. 
Set $b_0 \coloneqq \nu_0$ and write $b_0=a_0q+r_0$, where $0 \leq r_0 \leq q-1$.
For $1 \leq j \leq N$, set 
\begin{align*}
b_j \coloneqq \nu_j + a_{j-1}, \quad b_j=a_jq+r_j, \quad 0 \leq r_j \leq q-1.
\end{align*}
%
\if0
We put
%
\begin{align*}
b_0 \coloneqq \nu_0, \quad b_0=a_0q+r_0, \quad 0 \leq r_0 \leq q-1
\end{align*}
%
and define, for $1 \leq j \leq N$, 
%
\begin{align*}
b_j \coloneqq \nu_j + a_{j-1}, \quad b_j=a_jq+r_j, \quad 0 \leq r_j \leq q-1.
\end{align*}
%
\fi

\begin{lem}\label{nu for j leq N}
For any $0 \leq i \leq N$, we have
\begin{align*}
\sum_{j=0}^i \phi_{N, j}(\nu_j) \geq \sum_{j=0}^{i-1} \phi_{N, j}(r_j)+\phi_{N, i}(b_i).
\end{align*}
Moreover, if there exists $0 \leq j \leq i-1$ such that $a_j > 0$, then the inequality is strict.
\end{lem}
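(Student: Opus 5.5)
We argue by induction on $i$. For $i=0$ the right-hand side is just $\phi_{N,0}(b_0)=\phi_{N,0}(\nu_0)$, so equality holds and there is nothing to prove; the strictness clause is vacuous. The whole argument rests on one inequality: for $0\le i\le N-1$,
\begin{align*}
\phi_{N,i}(b_i) \geq \phi_{N,i}(r_i)+\phi_{N,i+1}(a_i),
\end{align*}
with strict inequality when $a_i>0$.

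To get this inequality, apply Lemma \ref{add of phi}(1) to $b_i=a_iq+r_i$ with $j=i$. This gives
\begin{align*}
\phi_{N,i}(b_i)=\phi_{N,i}(r_i)+\phi_{N,i+1}(a_i)+(q-1)(N-i)a_i .
\end{align*}
Since $i\le N-1$, the last term satisfies $(q-1)(N-i)a_i\ge (q-1)a_i\ge 0$, and it is strictly positive exactly when $a_i>0$. This yields the inequality together with its strictness statement.

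Now suppose the claim holds for some $i\le N-1$, and add $\phi_{N,i+1}(\nu_{i+1})$ to both sides. The right-hand side becomes
\begin{align*}
\sum_{j=0}^{i-1}\phi_{N,j}(r_j)+\phi_{N,i}(b_i)+\phi_{N,i+1}(\nu_{i+1}).
\end{align*}
The inequality above bounds $\phi_{N,i}(b_i)$ from below by $\phi_{N,i}(r_i)+\phi_{N,i+1}(a_i)$. Then Lemma \ref{add of phi}(2) combines the two $\phi_{N,i+1}$ terms: $\phi_{N,i+1}(a_i)+\phi_{N,i+1}(\nu_{i+1})\ge \phi_{N,i+1}(a_i+\nu_{i+1})=\phi_{N,i+1}(b_{i+1})$. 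This gives the claim for $i+1$.

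For strictness, suppose $a_j>0$ for some $j\le i$. If $j\le i-1$, the inequality at stage $i$ is already strict by the induction hypothesis, and the further steps are non-strict inequalities, so strictness is preserved. If $j=i$, strictness comes from the strict case of the displayed inequality. The only point needing care is that the factor $N-i$ is positive, which holds because the recursion only runs over $i\le N-1$ when passing to $i+1\le N$. There is no real obstacle beyond this bookkeeping.
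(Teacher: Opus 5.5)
Your proof is correct and follows the paper's argument: induction on $i$, splitting $\phi_{N,i}(b_i)$ via Lemma \ref{add of phi}(1), dropping the term $(q-1)(N-i)a_i$ (strictly positive when $a_i>0$ because $i\le N-1$), and then merging the two $\phi_{N,i+1}$ terms by Lemma \ref{add of phi}(2). The only difference is cosmetic: you step from $i$ to $i+1$ and isolate the key inequality separately, whereas the paper steps from $i-1$ to $i$ in a single chain of inequalities.
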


\begin{proof}
We verify the claim by induction on $i$.
For $i=0$, the assertion follows from $b_0=\nu_0$.
We suppose that $i>0$ and that the lemma holds for $i-1$.
Then, since $b_{i-1}=a_{i-1}q+r_{i-1}$, the induction hypothesis and Lemma \ref{add of phi} give
\begin{align*}
&\sum_{j=0}^i \phi_{N, j}(\nu_j) \geq \sum_{j=0}^{i-2} \phi_{N, j}(r_j)+ \phi_{N, i-1}(a_{i-1}q+r_{i-1}) + \phi_{N, i}(\nu_i) \\
=& \sum_{j=0}^{i-2} \phi_{N, j}(r_j)+ \phi_{N, i-1}(r_{i-1}) +\phi_{N, i}(a_{i-1}) +(q-1)(N-i+1)a_{i-1} + \phi_{N, i}(\nu_i) \\
\geq& \sum_{j=0}^{i-1} \phi_{N, j}(r_j)+\phi_{N, i}(\nu_i +a_{i-1}).
\end{align*}
Note that the last inequality is strict if $a_{i-1} >0$. %もっとまじめに書こう
\end{proof}

Applying Lemma \ref{nu for j leq N} to $i=N$ and using Lemma \ref{add of phi}(1), we obtain
\begin{align}\label{sum for j leq N}
\begin{split}
\sum_{j=0}^N \phi_{N, j}(\nu_j) &\geq \sum_{j=0}^{N-1} \phi_{N, j}(r_j)+\phi_{N, N}(b_N) \\
&=\sum_{j=0}^{N} \phi_{N, j}(r_j)+\phi_{N, N+1}(a_N).
\end{split}
\end{align}

\begin{lem}\label{sum_all}
Let
\begin{align}\label{def_of_k}
k \coloneqq a_N + \sum_{j=N+1}^{\infty} \nu_j q^{j-N-1}.
\end{align}
Then, we have
\begin{align*}
\sum_{j=0}^{\infty} \phi_{N, j}(\nu_j) \geq \sum_{j=0}^{N} \phi_{N, j}(r_j)+\phi_{N, N+1}(k).
\end{align*}
Moreover, the inequality is strict if either of the following conditions is satisfied:
\begin{enumerate}
\item there exists $j>N+1$ such that $\nu_j>0$,
\item there exists $j<N$ such that $a_j>0$.
\end{enumerate}
\end{lem}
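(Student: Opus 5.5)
Starting from \eqref{sum for j leq N}, it remains to handle the tail $\sum_{j\geq N+1}\phi_{N,j}(\nu_j)$ and to fold it together with $\phi_{N,N+1}(a_N)$ into the single term $\phi_{N,N+1}(k)$. The plan is to push every index $j\geq N+1$ down to $N+1$ using Lemma \ref{ineq phi}(2), and then merge the resulting arguments using the subadditivity in Lemma \ref{add of phi}(2).

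\textbf{Pushing down.} For each $j\geq N+1$ write $j=N+1+s$ with $s\geq 0$. Lemma \ref{ineq phi}(2) gives
\begin{align*}
\phi_{N,j}(\nu_j)\geq \phi_{N,N+1}(q^{j-N-1}\nu_j),
\end{align*}
and the inequality is strict when $j>N+1$ and $\nu_j>0$. Since $\nu$ has finite support, only finitely many terms are nonzero, and $\phi_{N,j}(0)=0$. Summing, we get
\begin{align*}
\sum_{j\geq N+1}\phi_{N,j}(\nu_j)\geq \sum_{j\geq N+1}\phi_{N,N+1}(q^{j-N-1}\nu_j).
\end{align*}

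\textbf{Merging and assembling.} Applying Lemma \ref{add of phi}(2) repeatedly with index $N+1$ gives
\begin{align*}
\phi_{N,N+1}(a_N)+\sum_{j\geq N+1}\phi_{N,N+1}(q^{j-N-1}\nu_j)\geq \phi_{N,N+1}\Bigl(a_N+\sum_{j\geq N+1}\nu_jq^{j-N-1}\Bigr)=\phi_{N,N+1}(k).
\end{align*}
Adding \eqref{sum for j leq N} to these two bounds yields the main inequality.

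\textbf{Strictness.} Under condition (1), the push-down step is strict for that particular $j$. Under condition (2), Lemma \ref{nu for j leq N} with $i=N$ is strict whenever some $a_j>0$ with $j\leq N-1$, i.e.\ $j<N$, and \eqref{sum for j leq N} inherits this strictness. Every other step in the chain is a non-strict $\geq$ or an equality, so a single strict link makes the whole chain strict.

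The only point needing care is this bookkeeping of strictness through the chain; there is no substantive obstacle beyond checking that each intermediate step is indeed $\geq$ or $=$, not $\leq$.
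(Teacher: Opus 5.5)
Your proposal is correct and follows the paper's proof essentially step for step. You push each tail index down to $N+1$ via Lemma \ref{ineq phi}(2), merge everything with $\phi_{N,N+1}(a_N)$ using the subadditivity of Lemma \ref{add of phi}(2), and add \eqref{sum for j leq N}; strictness then comes from Lemma \ref{ineq phi}(2) under condition (1) and from Lemma \ref{nu for j leq N} under condition (2), exactly as in the paper.
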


\begin{proof}
From Lemma \ref{ineq phi}(2), we have
\begin{align}\label{sum_for_j_geq_N+1}
\sum_{j=N+1}^{\infty} \phi_{N, j}(\nu_j) \geq \sum_{j=N+1}^{\infty} \phi_{N,N+1}(q^{j-N-1}\nu_j).
\end{align}
Combining \eqref{sum for j leq N} with \eqref{sum_for_j_geq_N+1}, we obtain
\begin{align*}
\sum_{j=0}^{\infty} \phi_{N, j}(\nu_j) &\geq \sum_{j=0}^{N} \phi_{N, j}(r_j)+\phi_{N, N+1}(a_N) + \sum_{j=N+1}^{\infty} \phi_{N,N+1}(q^{j-N-1}\nu_j) \\
&\geq \sum_{j=0}^{N} \phi_{N, j}(r_j)+\phi_{N, N+1}\left(a_N+\sum_{j=N+1}^{\infty}\nu_j q^{j-N-1}\right),
\end{align*}
where the last inequality follows from Lemma \ref{add of phi}(2).

We verify the strictness assertion.
If there exists $j>N+1$ such that $\nu_j>0$, then Lemma \ref{ineq phi}(2) implies that \eqref{sum_for_j_geq_N+1} is strict.
If there exists $j<N$ such that $a_j>0$, then it follows from Lemma \ref{nu for j leq N} that \eqref{sum for j leq N} is strict.
Hence, in either case, the desired inequality is strict.
\end{proof}

We now prove Proposition \ref{zenhan}.

\begin{proof}[Proof of Proposition \ref{zenhan}]
For $\nu=(\nu_j)_{j \geq 0}$ with $M(\nu)=m$, we put
\begin{align*}
C(\nu) \coloneqq w(n-m) + \sum_{j \geq 0} \phi_{N, j}(\nu_j).
\end{align*}
By \eqref{val of target}, it suffices to show that
\begin{align*}
C(\nu) \geq H(k)
\end{align*}
for some $0 \leq k \leq \lfloor n/q^{N+1} \rfloor$.

Let $k$ be the integer defined in \eqref{def_of_k}.
Then we have
\begin{align}\label{poperty_of_k}
n-m+\sum_{j=0}^Nr_jq^j=n-kq^{N+1}.
\end{align}
Indeed, since
\begin{align*}
\sum_{j=0}^N \nu_jq^j=a_0q+r_0+\sum_{j=1}^N(a_jq+r_j-a_{j-1})q^j=q^{N+1}a_N+\sum_{j=0}^Nr_jq^j
\end{align*}
by the definitions of $a_j$ and $r_j$ and a telescoping sum, we see that 
\begin{align*}
m&=M(\nu)=\sum_{j=0}^\infty \nu_j q^j \\
&=q^{N+1}a_N+\sum_{j=0}^Nr_jq^j+\sum_{j=N+1}^\infty \nu_j q^j \\
&=q^{N+1}\left(a_N+\sum_{j=N+1}^\infty \nu_j q^{j-N-1} \right)+\sum_{j=0}^Nr_jq^j=kq^{N+1}+\sum_{j=0}^Nr_jq^j.
\end{align*}
This proves \eqref{poperty_of_k}.
Since the left-hand side of \eqref{poperty_of_k} is nonnegative, we also obtain $0 \leq k \leq \lfloor n/q^{N+1} \rfloor$. 
%since the left-hand side of \eqref{poperty_of_k} is nonnegative.

Hence, by Lemma \ref{sum_all}, Lemma \ref{ineq phi}(1), and \eqref{poperty_of_k}, we conclude that
\begin{align}\label{juuyou}
\begin{split}
C(\nu) &\geq w(n-m)+ \sum_{j=0}^{N} \phi_{N, j}(r_j)+\phi_{N, N+1}(k) \\
&\geq w(n-m)+ \sum_{j=0}^{N} w(r_jq^j)+\phi_{N, N+1}(k) \\
&\geq w\left(n-m+\sum_{j=0}^Nr_jq^j \right) +\phi_{N, N+1}(k) \\
&=w(n-kq^{N+1})+k(u-1)-v_p(k!)=H(k).
\end{split}
\end{align}
\end{proof}

We determine the minimum of $H(j)$, which will yield one of the inequalities in Theorem \ref{main1}.
%Proposition \ref{zenhan} implies a part of Theorem \ref{main1}.

\begin{lem}\label{monolem}
We write $n=lq^{N+1}+r$ with $l \geq 0$ and $0 \leq r \leq q^{N+1}-1$. Then, for any $0 \leq j \leq l-1$, we have $H(j)>H(j+1)$.
In particular, we have
\begin{align*}
\min_{0 \leq j \leq l} \{ H(j)\}=H(l)=G_N(n).
\end{align*}
\end{lem}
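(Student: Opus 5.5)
The plan is to compute the difference $H(j)-H(j+1)$ directly and show it is strictly positive for $0 \leq j \leq l-1$. Once that is done, $H$ is strictly decreasing on $\{0,\dots,l\}$, so the minimum is $H(l)$. Moreover $H(l)=w(r)+l(u-1)-v_p(l!)=G_N(n)$ by definition.

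By definition,
\begin{align*}
H(j)-H(j+1)=w(n-jq^{N+1})-w(n-(j+1)q^{N+1})-(u-1)+v_p(j+1).
\end{align*}
Put $c \coloneqq l-j-1 \geq 0$. Then $n-(j+1)q^{N+1}=cq^{N+1}+r$ and $n-jq^{N+1}=(c+1)q^{N+1}+r$. Since $0 \leq r \leq q^{N+1}-1=p^{2N+2}-1$, the $p$-adic digits of $r$ occupy positions $0,\dots,2N+1$, and those of $cq^{N+1}$ occupy positions $\geq 2N+2$. Straight from the definition \eqref{monna}, $w$ is additive on sums with disjoint digit supports. Combining this with Lemma \ref{lemAB}(1) gives $w(cq^{N+1}+r)=w(r)+q^{-N-1}w(c)$, and similarly for $c+1$. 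Therefore
\begin{align*}
w(n-jq^{N+1})-w(n-(j+1)q^{N+1})=q^{-N-1}\bigl(w(c+1)-w(c)\bigr).
\end{align*}

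The main step is to bound $w(c+1)-w(c)$ from below. Let $s=v_p(c+1)$. Then $c$ ends in $s$ digits equal to $p-1$ followed by some digit $d-1$, and $c+1$ has $s$ zeros followed by $d$; the higher digits agree. Hence
\begin{align*}
w(c+1)-w(c)=u\Bigl(p^{-s}-(p-1)\sum_{i=0}^{s-1}p^{-i}\Bigr)=u\bigl((p+1)p^{-s}-p\bigr)\geq -up.
\end{align*}
Using $v_p(j+1)\geq 0$ and $N\geq 1$, so that $q^{-N-1}\leq p^{-4}$, we get
\begin{align*}
H(j)-H(j+1)\geq 1-u-up^{-3}=1-\frac{p(1+p^{-3})}{p^2-1}.
\end{align*}
This is strictly positive for every prime $p$, because $p+p^{-2}<p^2-1$ for $p\geq 2$.

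The only delicate point is this positivity estimate. It genuinely needs $N\geq 1$: with the factor $q^{-1}$ in place of $q^{-N-1}$, the same bound becomes $1-u-u/p=1-1/(p-1)$, which vanishes when $p=2$. So I would be careful to use that $N$ is a positive integer, as fixed in the introduction.
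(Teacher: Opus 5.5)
Your proof is correct and follows essentially the same route as the paper: the shift formula $w(cq^{N+1}+r)=q^{-N-1}w(c)+w(r)$, the carry computation $w(c+1)-w(c)=u\bigl((p+1)p^{-s}-p\bigr)$ bounded below by $-up$, $v_p(j+1)\geq 0$, and then $N\geq 1$ to make the resulting constant negative (you substitute $q^{-N-1}\leq p^{-4}$ early, while the paper keeps $N$ general). As a small aside, because $(p+1)p^{-s}>0$ the bound $w(c+1)-w(c)>-up$ is actually strict, and with that the inequality would still hold at $N=0$; so your closing caveat is a limitation of your non-strict estimate, not of the lemma itself.
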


\begin{proof}
For $0 \leq j \leq l-1$, we put $a \coloneqq l-j \geq 1$.
A direct calculation gives
\begin{align*}
w(a-1)-w(a)=u(p-(p+1)p^{-v_p(a)})<pu.
\end{align*}
Since
\begin{align*}
w(aq^{N+1}+r)=q^{-N-1}w(a)+w(r),
\end{align*}
we see that
\begin{align*}
H(j+1)-H(j)&=w((a-1)q^{N+1}+r)-w(aq^{N+1}+r)+u-1-v_p(j+1) \\
&=q^{-N-1}(w(a-1)-w(a))+u-1-v_p(j+1) \\
&<p^{-2N-1}u+u-1-v_p(j+1) \\
&=\frac{p^{-2N}}{p^2-1}-\frac{p^2-p-1}{p^2-1}-v_p(j+1)<0.
\end{align*}
\end{proof}

\begin{cor}\label{thm_zenhan}
We have
\begin{align*}
|| P_n (x\Omega)||_N \leq p^{-G_N(n)}.
\end{align*}
\end{cor}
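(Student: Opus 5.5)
The plan is to read off the norm directly from the local expansion \eqref{P_n on discs}. By definition, $||P_n(x\Omega)||_N=\max_{a\in\mathcal{O}_F}||P_n(x\Omega)||_{a,N}$. Here $||\cdot||_{a,N}$ is the maximum of the absolute values of the Taylor coefficients of $Y\mapsto P_n((a+p^NY)\Omega)$. So it suffices to show that, for each fixed $a\in\mathcal{O}_F$ and each $t\ge 0$, the coefficient of $Y^t$ in \eqref{P_n on discs} has valuation at least $G_N(n)$.

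That coefficient is a finite sum of terms indexed by pairs $(m,\nu)$ with $M(\nu)=m\le n$ and $|\nu|=t$. Each term has the form appearing on the left of Proposition \ref{zenhan}, so by the ultrametric inequality its valuation is at least the minimum of the valuations of these terms. Proposition \ref{zenhan} bounds each term's valuation below by $H(k)$ for some $0\le k\le\lfloor n/q^{N+1}\rfloor=l$. Note that $l$ coincides with the $l$ in the decomposition $n=lq^{N+1}+r$, since $0\le r\le q^{N+1}-1$.

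By Lemma \ref{monolem}, $H(k)\ge \min_{0\le j\le l}H(j)=H(l)=G_N(n)$. Hence every coefficient of every local expansion has absolute value at most $p^{-G_N(n)}$, which gives $||P_n(x\Omega)||_{a,N}\le p^{-G_N(n)}$ for all $a$, and taking the maximum over $a$ yields the corollary.

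There is no real obstacle here. The only point to check is that the sum defining each coefficient is finite, since $m\le n$ and $M(\nu)=m$ allow only finitely many $\nu$, so the non-archimedean triangle inequality applies directly. All of the substantive work has already been done in Proposition \ref{zenhan} and Lemma \ref{monolem}.
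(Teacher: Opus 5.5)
Your proposal is correct and is essentially the paper's proof. The paper also writes each Taylor coefficient $A_{a,d}$ of $P_n((a+p^NY)\Omega)$ as a finite sum over $\nu$ with $|\nu|=d$ and $M(\nu)\le n$, bounds each term below by some $H(k)$ with $k\le l$ via Proposition~\ref{zenhan}, and concludes with Lemma~\ref{monolem}.
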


\begin{proof}
For $a \in \mathcal{O}_F$, we write
\begin{align*}
P_n((a+p^NY)\Omega)=\sum_{d=0}^n A_{a, d} Y^d,
\end{align*}
where
\begin{align*}
A_{a, d}=\sum_{\substack{\nu \\ |\nu|=d, M(\nu)\leq n}} P_{n-M(\nu)}(a\Omega) \frac{p^{\sum_{j \geq 0}(N-j)\nu_j} \Omega^{d}}{\prod_{j \geq 0}\nu_j!}.
\end{align*}
It follows from Proposition \ref{zenhan} and Lemma \ref{monolem} that $|A_{a, d}| \leq p^{-G_N(n)}$ for any $a \in \mathcal{O}_F$ and $0 \leq d \leq n$, which proves the assertion.
\if0
From \eqref{P_n on discs}, it suffices to prove that
%
\begin{align*}
\left| \sum_{\substack{\nu \\ M(\nu)=m}} P_{n-m}(a\Omega) \frac{p^{\sum_{j \geq 0}(N-j)\nu_j} \Omega^{|\nu|}}{\prod_{j \geq 0}\nu_j!} \right| \leq p^{-G_N(n)}
\end{align*}
%
for any $a \in \mathcal{O}_F$ and $0 \leq m \leq n$.
This follows from Proposition \ref{zenhan} and Lemma \ref{monolem}.
\fi
\end{proof}

\subsection{The reverse inequality}

To complete the proof of Theorem \ref{main1}, we consider the norm $||P_n(x\Omega)||_{1, N}$ on the residue disk $1+p^N\mathcal{O}_F$.
If we write $n=lq^{N+1}+r$ with $l \geq 0$ and $0 \leq r \leq q^{N+1}-1$, then, by \eqref{P_n on discs}, the coefficient of $Y^l$ in $P_n((1+p^NY)\Omega)$ is given by
\begin{align}\label{target_kohan}
 \sum_{\substack{\nu \\ |\nu|=l, M(\nu) \leq n}} P_{n-M(\nu)}(\Omega) \frac{p^{\sum_{j \geq 0}(N-j)\nu_j} \Omega^{l}}{\prod_{j \geq 0}\nu_j!}.
\end{align}
%
%We compute the $p$-adic valuation of \eqref{target_kohan}.

To compute the $p$-adic valuation of \eqref{target_kohan}, we define the sequence $\nu'=(\nu'_j)_{j \geq 0}$ by
\begin{align*}
\nu'_j=
\begin{cases}
l & \text{if $j=N+1$}, \\
0 & \text{if $j\neq N+1$}.
\end{cases}
\end{align*}

\begin{lem}\label{kantan}
We have
\begin{align*}
v_p \left( P_{n-M(\nu')}(\Omega) \frac{p^{\sum_{j \geq 0}(N-j)\nu'_j} \Omega^{l}}{\prod_{j \geq 0}\nu'_j!} \right)=G_N(n).
\end{align*}
\end{lem}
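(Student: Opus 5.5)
This is a direct computation: substitute the definition of $\nu'$ into the expression and evaluate each factor separately.

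Since $\nu'_{N+1}=l$ and all other entries vanish, we have
\begin{align*}
M(\nu')=lq^{N+1}, \qquad n-M(\nu')=r, \qquad \sum_{j\geq 0}(N-j)\nu'_j=-l, \qquad \prod_{j\geq 0}\nu'_j!=l!.
\end{align*}
Hence the quantity in question is
\begin{align*}
P_r(\Omega)\,\frac{p^{-l}\,\Omega^{l}}{l!}.
\end{align*}

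Its valuation is computed factor by factor:
\begin{enumerate}
\item By Theorem \ref{thmAB}, or by the equality case of Lemma \ref{comm_to_AB} with $x=1$, the first factor satisfies $v_p(P_r(\Omega))=w(r)$.
\item For the second factor we need the Lubin--Tate period valuation $v_p(\Omega)=\frac{1}{p-1}-\frac{1}{e(q-1)}$. Here $e=1$ and $q=p^2$, so
\begin{align*}
v_p(\Omega)=\frac{p+1-1}{p^2-1}=\frac{p}{q-1}=u.
\end{align*}
\end{enumerate}

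Adding these contributions gives
\begin{align*}
w(r)+lu-l-v_p(l!)=w(r)+l(u-1)-v_p(l!)=G_N(n),
\end{align*}
which is exactly the definition of $G_N(n)$. Equivalently, the valuation is $w(r)+\phi_{N,N+1}(l)$, which matches $H(l)$.

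There is no real obstacle in this lemma. The only point needing care is the identification $v_p(\Omega)=u$, which uses $e=1$ and $q=p^2$ in the period formula recalled in the introduction.
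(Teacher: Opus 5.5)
Your proposal is correct and follows the paper's proof: substitute $\nu'$ to reduce the quantity to $P_r(\Omega)\,p^{-l}\Omega^{l}/l!$, then use $v_p(P_r(\Omega))=w(r)$ from Theorem \ref{thmAB} together with $v_p(\Omega)=u$. You also spell out the check $v_p(\Omega)=\frac{1}{p-1}-\frac{1}{q-1}=u$, which the paper uses without comment.
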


\begin{proof}
The left-hand side equals
\begin{align*}
v_p \left( P_{n-lq^{N+1}}(\Omega) \frac{p^{-l} \Omega^{l}}{l!} \right)=v_p(P_r(\Omega))+l(u-1)-v_p(l!)=G_N(n).
\end{align*}
\end{proof}

\begin{lem}\label{taihen}
Let $\nu=(\nu_j)_{j\geq 0}$ be a sequence of nonnegative integers with finite support satisfying $|\nu|=l$ and $M(\nu) \leq n$.
If $\nu \neq \nu'$, then we have
\begin{align*}
v_p \left( P_{n-M(\nu)}(\Omega) \frac{p^{\sum_{j \geq 0}(N-j)\nu_j} \Omega^{l}}{\prod_{j \geq 0}\nu_j!} \right)>G_N(n).
\end{align*}
\end{lem}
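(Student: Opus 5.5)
The plan is to rerun the chain of inequalities from the proof of Proposition \ref{zenhan} with $a=1$, and to show that for $\nu\neq\nu'$ at least one link in the chain is strict. By \eqref{val of target}, the valuation on the left-hand side is at least
\[
C(\nu)=w(n-M(\nu))+\sum_{j\ge0}\phi_{N,j}(\nu_j).
\]
Let $k$ be defined by \eqref{def_of_k}. Then \eqref{juuyou} gives $C(\nu)\ge H(k)$ with $0\le k\le l$. Lemma \ref{monolem} gives $H(k)\ge H(l)=G_N(n)$, and this inequality is strict whenever $k<l$. So it suffices to handle the case $k=l$ and to show that in this case some inequality in \eqref{juuyou} is strict.

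Assume $k=l$. First, if $\nu_j>0$ for some $j>N+1$, then Lemma \ref{sum_all}(1) makes the first inequality of \eqref{juuyou} strict, and we are done. Second, if $a_j>0$ for some $j<N$, then Lemma \ref{sum_all}(2) gives strictness in the same way. It therefore remains to treat the case where $\nu_j=0$ for all $j>N+1$ and $a_j=0$ for all $j<N$.

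In this remaining case I will show directly that $\nu=\nu'$, which contradicts the hypothesis. Since $a_j=0$ for $j<N$, the recursion gives $b_j=\nu_j$ for all $j\le N$. In particular $\nu_N=b_N=a_Nq+r_N\ge q a_N$. The definition of $k$ now reads $k=a_N+\nu_{N+1}$, so $k=l$ means $a_N+\nu_{N+1}=l$. Comparing with the hypothesis
\[
|\nu|=\sum_{j<N}\nu_j+\nu_N+\nu_{N+1}=l,
\]
we obtain $\sum_{j<N}\nu_j+\nu_N=a_N$. Combined with $\nu_N\ge qa_N\ge a_N$, this forces $\nu_j=0$ for all $j<N$ and $\nu_N=a_N$. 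The inequality $a_N\ge qa_N$ then gives $a_N=0$, hence $\nu_N=0$ and $\nu_{N+1}=l$. Thus $\nu=\nu'$.

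The only delicate point is this final bookkeeping: one has to see that the constraint $|\nu|=l$, which appears in the coefficient of $Y^l$ but not in Proposition \ref{zenhan}, is exactly what rules out the carries at level $N$. Everything else is already packaged in the strictness statements of Lemmas \ref{sum_all} and \ref{monolem}. We never need equality in Lemma \ref{comm_to_AB}, because only lower bounds on the valuations of the terms other than the one indexed by $\nu'$ are required.
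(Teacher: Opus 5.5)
Your proof is correct and follows the paper's argument: the same chain $C(\nu)\ge H(k)\ge H(l)=G_N(n)$, strictness from Lemma~\ref{monolem} when $k<l$, and strictness from Lemma~\ref{sum_all} when some $\nu_j>0$ with $j>N+1$ or some carry $a_j>0$ with $j<N$. The one difference is in the final case. The paper first uses the strict part of Lemma~\ref{ineq phi}(1) to force $\nu_j=0$ for $j<N$ and $r_N=0$, and only then uses $|\nu|=k$ to get $a_N=0$; you get all of this at once from $\sum_{j\le N}\nu_j=a_N\le qa_N\le\nu_N$, so the strictness assertion in Lemma~\ref{ineq phi}(1) is not actually needed for this lemma.
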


\begin{proof}
We use the notation in the previous subsection.
Since we proved 
\begin{align}\label{at_least}
C(\nu) \geq H(k) \geq H(l)=G_N(n)
\end{align}
in the previous subsection, it suffices to prove that at least one of the inequalities in \eqref{at_least} is strict.
If $k<l$, the claim follows from Lemma \ref{monolem}.
Hence it remains to consider the case $k=l$.

Suppose that $C(\nu)=H(l)$. 
We first show that $\nu=(\nu_j)_{j \geq 0}$ has the form
\begin{align}\label{N_to_N+1}
\nu_j=
\begin{cases}
0 & \text{if $j \neq N, N+1$}, \\
qa_N & \text{if $j=N$}, \\
\nu_{N+1} & \text{if $j=N+1$}.
\end{cases}
\end{align}
%
%considering the following four cases.

If there exists $j>N+1$ such that $\nu_j>0$, or if there exists $j<N$ such that $a_j>0$, the last assertion of Lemma \ref{sum_all} implies that the first inequality of \eqref{juuyou} is strict, which is a contradiction.
In particular, we have $a_i=0$ and $r_i=\nu_i$ for any $0 \leq i \leq N-1$ by construction.

If there exists $j<N$ such that $\nu_j=r_j>0$, we see that $\phi_{N, j}(r_j)>w(q^jr_j)$ by Lemma \ref{ineq phi}(1).
Hence, the second inequality of \eqref{juuyou} is strict. 
The case $r_N>0$ is treated similarly, again using Lemma \ref{ineq phi}(1).
This proves that $\nu$ has the form \eqref{N_to_N+1}.

%うまく書けないよおおおおおお

%Hence we complete the proof of \eqref{N_to_N+1}. 
Since $|\nu|=l=k$, we find that
\begin{align*}
qa_N+\nu_{N+1}=a_N + \sum_{j=N+1}^{\infty} \nu_j q^{j-N-1}.
\end{align*}
Equivalently, $(q-1)a_N=0$, and hence $a_N=0$.
It follows that $\nu=\nu'$, contradicting the assumption.
\end{proof}

\begin{proof}[Proof of Theorem \ref{main1}]
Lemma \ref{kantan} and Lemma \ref{taihen} imply that the $p$-adic valuation of \eqref{target_kohan} is given by $G_N(n)$.
Combining this with Corollary \ref{thm_zenhan}, we obtain the theorem.
\end{proof}

\begin{ex}
If $n<q^{N+1}$, then $G_N(n)=w(n)$ and hence
\begin{align*}
||P_n(x\Omega)||_N=p^{-w(n)}=|P_n(\Omega)|.
\end{align*}
On the other hand, if $n=q^{N+1}$, then
\begin{align*}
G_N(q^{N+1})=u-1<0<uq^{-N-1}=w(q^{N+1})
\end{align*}
and hence
\begin{align*}
||P_{q^{N+1}}(x\Omega)||_N=p^{-u+1}>|P_{q^{N+1}}(\Omega)|.
\end{align*}
\end{ex}

We conclude by proving Corollary \ref{main2}.

\begin{proof}[Proof of Corollary \ref{main2}]
\cite[Proposition 3.4]{BK16} implies that
\begin{align*}
\left| \underline{\gamma}\left( \left\lfloor \frac{n}{q^N} \right\rfloor \right) \right|=\underline{\rho} \left( \left\lfloor \frac{n}{q^N} \right\rfloor \right)=\left| \frac{(lq)!}{\Omega^{lq}}\right|.
\end{align*}
Since
\begin{align*}
v_p \left(\frac{(lq)!}{\Omega^{lq}}\right)=-l(u-1)+v_p(l!), 
\end{align*}
it follows from Theorem \ref{main1} that $||e_{N, n}(x)||_N=p^{-w(r)}$.
Hence Theorem \ref{thmAB} gives the desired equality.
\end{proof}

\vspace{10pt}

\noindent
Institute of Mathematics for Industry, Kyushu University,\\
744, Motooka, Nishi-ku, Fukuoka, 819-0395, Japan,\\
%Mathematical Institute, Graduate School of Science, Tohoku University,\\
%6-3 Aramakiaza, Aoba, Sendai, Miyagi 980-8578, Japan.\\
E-mail address: \textbf{yu.katagiri.s3@gmail.com}\\


\begin{bibdiv}
\begin{biblist}

\bib{Am64}{article}{
   author={Amice, Yvette},
   title={Interpolation $p$-adique},
   journal={Bull. Soc. Math. France},
   volume={92},
   date={1964},
   pages={117-180},
}

\bib{AB24a}{article}{
   author={Ardakov, Konstantin},
   author={Berger, Laurent},
   title={Bounded functions on the character variety},
   journal={M\"unster J. Math.},
   volume={17},
   date={2024},
   number={1},
   pages={1--83},
 
}

\bib{AB24b}{article}{
   author={Ardakov, Konstantin},
   author={Berger, Laurent},
   title={$p$-adic Fourier theory for $\mathbb{Q}_{p^2}$ and the Monna map},
   date={2024},
   note={arXiv:2405.05726},
}

\bib{BK16}{article}{
   author={Bannai, Kenichi},
   author={Kobayashi, Shinichi},
   title={Integral structures on $p$-adic Fourier theory},
   journal={Ann. Inst. Fourier (Grenoble)},
   volume={66},
   date={2016},
   number={2},
   pages={521--550},
}

\bib{Sc03}{thesis}{
   author={Scanlon, M. G. T.},
   title={$p$-adic Fourier analysis},
   type={Ph.D. thesis},
   school={University of Durham},
   date={2003},
}





\bib{ST01}{article}{
   author={Schneider, Peter},
   author={Teitelbaum, Jeremy},
   title={$p$-adic Fourier theory},
   journal={Doc. Math.},
   volume={6},
   date={2001},
   pages={447-481},
}



\end{biblist}
\end{bibdiv}
\end{document}